\newcommand{\N}{\mathbb {N}}
\newcommand{\Z}{\mathbb {Z}}
\DeclareMathOperator{\closure}{closure}
\DeclareMathOperator{\diameter}{diamater}
\newtheorem{theorem}{Theorem}
\newtheorem{lemma}{Lemma}
\newtheorem{remark}{Remark}
\newtheorem{proposition}{Proposition}
\begin{document}

\title[Dimension and measure of baker-like skew-products]{Dimension and measure of baker-like skew-products of $\boldsymbol{\beta}$-transformations}
\author{David F\"arm}
\address{David F\"arm, Institute of Mathematics, Polish Academy of
  Sciences ulica \'Sniadeckich 8, P.O. Box 21, 00-956 Warszawa,
  Poland}
\curraddr{Centre for Mathematical Sciences, Box~118, 22~100~Lund, Sweden}
\email{david@maths.lth.se}

\author{Tomas Persson}
\address{Tomas Persson, Institute of Mathematics, Polish Academy of
  Sciences ulica \'Sniadeckich 8, P.O. Box 21, 00-956 Warszawa,
  Poland}
\curraddr{Centre for Mathematical Sciences, Box~118, 22~100~Lund, Sweden}
\email{tomasp@maths.lth.se}

\thanks{Both authors were supported by
EC FP6 Marie Curie ToK programme CODY. Part of the paper was written
when the authors were visiting institut Mittag-Leffler in
Djursholm. The authors are grateful for the hospitality of the
institute. The authors would like to thank Lingmin Liao for pointing
out the articles \cite{BrownYin} and \cite{Kwon}.}

\begin{abstract}
  We consider a generalisation of the baker's transformation,
  consisting of a skew-product of contractions and a
  $\beta$-transformation. The Hausdorff dimension and Lebesgue measure
  of the attractor is calculated for a set of parameters with positive
  measure. The proofs use a new transverality lemma similar to
  Solomyak's \cite{Solomyak}. This transversality, which is applicable
  to the considered class of maps holds for a larger set of parameters
  than Solomyak's transversality.
\end{abstract}

\subjclass[2010]{Primary 37D50, 37C40, 37C45}

\maketitle

\section{Introduction}\label{5sec:introduction}

In \cite{AlexanderYorke}, Alexander and Yorke considered fat baker's
transformations. These are maps on the square $[0, 1) \times [0, 1)$,
    defined by
\[
  (x, y) \mapsto \left\{ \begin{array}{ll} (\lambda x,
    2 y) & \text{if } y < 1 / 2 \\ (\lambda x + 1 - \lambda,
    2 y - 1) & \text{if } y \geq 1 / 2 \end{array} \right. ,
\]
where $\frac{1}{2} < \lambda < 1$ is a parameter, see Figure
\ref{5fig:beta20lambda06}. They showed that the \textsc{srb}-measure
of this map is the product of Lebesgue-measure and (a rescaled version
of) the distribution of the corresponding Bernoulli convolution
\[
\sum_{k = 1}^\infty \pm \lambda^k.
\]
Together with Erd\H os' result \cite{Erdos}, this implies that if
$\lambda$ is the inverse of a Pisot-number, then the
\textsc{srb}-measure is singular with respect to the Lebesgue measure
on $[0, 1) \times [0, 1)$.

\begin{figure}
  \begin{center}
      \includegraphics[width=0.6\textwidth]{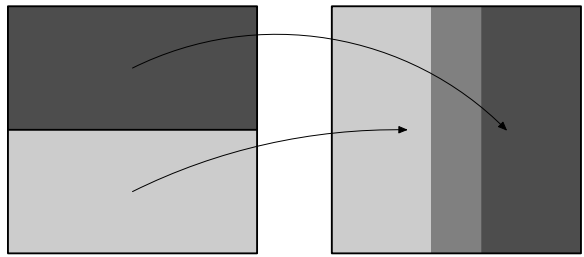}
  \end{center}
  \caption{The fat baker's transformation for $\lambda = 0.6$.}
  \label{5fig:beta20lambda06}
\end{figure}

In \cite{Solomyak}, Solomyak proved that for almost all $\lambda \in
(\frac{1}{2}, 1)$, the distribution of the corresponding Bernoulli
convolution $\sum_{k = 1}^\infty \pm \lambda^k$ is absolutely
continuous with respect to Lebesgue measure. Hence this implies that the
\textsc{srb}-measure of the fat baker's transformation is absolutely
continuous for almost all $\lambda \in (\frac{1}{2}, 1)$. Solomyak's
proof used a transversality property of power series of the form $g
(x) = 1 + \sum_{k = 1}^\infty a_k x^k$, where $a_k \in \{-1, 0, 1\}$.
More precisely, Solomyak proved that there exists a $\delta > 0$ such
that if $x \in (0, 0.64)$ then
\begin{equation} \label{5eq:solomyakstrans}
  | g (x) | < \delta \quad \Longrightarrow \quad g' (x) < -
  \delta.
\end{equation}
This property ensures that if the graph of $g(x)$ intersects the
$x$-axis it does so at an angle which is bounded away from 0, thereby
the name transversality. The constant $0.64$ is an approximation of a
root to a power series and cannot be improved to something larger than
this root. A simplified version of Solomyak's proof appeared in the paper
\cite{PeresSolomyak}, by Peres and Solomyak. We will make use of the
method from this simpler version.

In this paper we consider maps of the form
\begin{equation}\label{5eq:ourmap}
  (x, y) \mapsto \left\{ \begin{array}{ll} (\lambda x,
    \beta y) & \text{if } y < 1 / \beta \\ (\lambda x + 1 - \lambda,
    \beta y - 1) & \text{if } y \geq 1 / \beta \end{array} \right. ,
\end{equation}
where $0 < \lambda < 1$ and $1 < \beta < 2$, see Figure
\ref{5fig:beta17lambda08}. Using the above mentioned transversality of
Solomyak one can prove that for almost all $\lambda \in (0, 0.64)$ and
$\beta \in (1, 2)$ the \textsc{srb}-measure is absolutely continuous
with respect to Lebesgue measure provided $\lambda \beta > 1$, and the
Hausdorff dimension of the \textsc{srb}-measure is $1 + \frac{\log
  \beta}{\log 1/\lambda}$ provided $\lambda \beta < 1$.

\begin{figure}
  \begin{center}
      \includegraphics[width=0.6\textwidth]{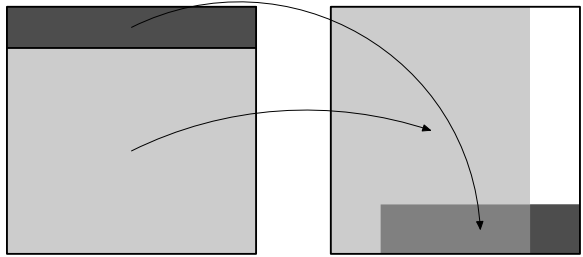}
  \end{center}
  \caption{The map \eqref{5eq:ourmap} for $\beta=1.2$ and $\lambda=0.8$}
  \label{5fig:beta17lambda08}
\end{figure}

A problem with this approach is that the condition $\lambda<0.64$ is
very restrictive when $\beta$ is close to 1. Then the above method
yields no $\lambda$ for which the \textsc{srb}-measure is absolutely
continuous, and it does not give the dimension of the
\textsc{srb}-measure for any $\lambda \in (0.64,1/\beta)$.

We prove that these results about absolute continuity and dimension of
the \textsc{srb}-measure hold for sets of $(\beta, \lambda)$ of
positive Lebesgue measure, even when $\lambda > 0.64$. This is done by
extending the interval on which the transversality property
\eqref{5eq:solomyakstrans} holds. This can be done in our setting,
since in our class of maps, not every sequence $(a_k)_{k = 1}^\infty$
with $a_k \in \{-1, 0, 1\}$ occurs in the power series $g (x) = 1 +
\sum_{k = 1}^\infty a_k x^k$ that we need to consider in the proof.
To control which sequences that occur, we will use some results of
Brown and Yin \cite{BrownYin} and Kwon \cite{Kwon} on natural
extensions of $\beta$-shifts.

The paper is organised as follows. In Section~\ref{5sec:betashifts} we
recall some facts about $\beta$-transformations and $\beta$-shifts. We
then present the results of Brown and Yin, and Kwon in
Section~\ref{5sec:symetric}. In Section~\ref{5sec:results} we state our
results, and give the proofs in Section~\ref{5sec:proofs}. The
transversality property is stated and proved in
Section~\ref{5sec:transversality}.

\section[$\beta$-shifts]{$\beta$-shifts} \label{5sec:betashifts}

Let $\beta > 1$ and define $f_\beta \colon [0, 1] \to [0, 1)$ by
  $f_\beta (x) = \beta x$ modulo 1. For $x \in [0, 1]$ we associate a
  sequence $d (x, \beta) = (d_k (x, \beta) )_{k = 1}^\infty$ defined
  by $d_k (x, \beta) = [ \beta f_\beta^{k - 1} (x) ]$ where $[ x ]$
  denotes the integer part of $x$. If $x \in [0, 1]$, then $x =
  \phi_\beta ( d (x, \beta) )$, where
\[
  \phi_\beta (i_1, i_2, \ldots) = \sum_{k = 1}^\infty
  \frac{i_k}{\beta^k}
\]
This representation, among others, of real numbers was studied by
R\'enyi \cite{Renyi}. He proved that there is a unique probability
measure $\mu_\beta$ on $[0,1]$ invariant under $f_\beta$ and
equivalent to Lebesgue measure. We will use this measure in Section
\ref{5sec:proofs}.

We let $S_\beta^+$ denote the closure in the product topology of the
set $\{\, d (x, \beta) : x \in [0, 1) \,\}$. The compact symbolic
  space $S_\beta^+$ together with the left shift $\sigma$ is called a
  $\beta$-shift. If we define $d_- (1, \beta)$ to be the limit in
    the product topology of $d (x, \beta)$ as $x$ approaches $1$ from the left, we
    have the equality
\begin{multline} \label{5eq:Sbeta+}
  S_\beta^+ = \{\, (a_1, a_2, \ldots) \in \{0, 1, \ldots, [\beta]
  \}^\N : \\ \sigma^k (a_1, a_2, \ldots) \leq d_- (1, \beta) \ \forall k
  \geq 0 \, \},
\end{multline}
where $\sigma$ is the left-shift. This was proved by Parry in
\cite{Parry}, where he studied the $\beta$-shifts and their invariant
measures. Note that $d_- (1, \beta) = d (1,\beta)$ if and only if $d
(1, \beta)$ contains infinitely many non-zero digits. A particularly
useful property of the $\beta$-shift is that $\beta<\beta'$ implies
$S^+_\beta \subset S^+_{\beta'}$. The map $\phi_\beta \colon
S_\beta^{+} \to [0,1]$ is not necessarily injective, but we have
$d(\cdot,\beta)\circ f_\beta=\sigma \circ d(\cdot,\beta)$.

\section{Symmetric ${\beta}$-shifts} \label{5sec:symetric}

Let $\beta > 1$ and consider $S_\beta^+$. The natural extension of
$(S_\beta^+, \sigma)$ can be realised as $(S_\beta, \sigma)$, with
\[
  S_\beta = \{\, (\ldots, a_{-1}, a_0, a_1, \ldots) : (a_n, a_{n+1},
  \ldots) \in S_\beta^+ \ \forall n \in \Z \,\},
\]
where $\sigma$ is the left shift on bi-infinite sequences. We will use
the concept of cylinder sets only in $S_\beta$.  A cylinder set is a
subset of $S_\beta$ of the form
\[
  [a_{-n}, a_{-n+1}, \ldots, a_0] = \{\, (\ldots, b_{-1}, b_0, b_1,
  \ldots) \in S_\beta : a_k = b_k \ \forall k = -n, \ldots, 0 \,\}.
\]
We define $S_\beta^-$ to be the set
\begin{align*}
  S_\beta^- & = \{\, (b_1, b_2, \ldots) : \exists (a_1, a_2, \ldots)
  \in S_\beta^+ \text{ s.t. } (\ldots, b_2, b_1, a_1, a_2, \ldots) \in
  S_\beta \,\} \\
  & = \{\, (b_1, b_2, \ldots) : (\ldots, b_2, b_1, 0, 0, \ldots) \in
  S_\beta \,\}.
\end{align*}
We will be interested in the set $S$ of $\beta$ for which $S_\beta^+ =
S_\beta^-$. This set was considered by Brown and Yin in
\cite{BrownYin}. We now describe the properties of $S$ that we will
use later on.

Consider a sequence of the digits $a$ and $b$. Any such sequence can be
written in the form
\[
  (a^{n_1}, b, a^{n_2}, b, \ldots),
\]
where each $n_k$ is a non-negative integer or $\infty$. We say that
such a sequence is allowable if $a\in \N$, $b = a - 1$, and $n_1 \geq
1$. If the sequence $(n_1, n_2, \ldots)$ is also allowable, we say
that $(a^{n_1}, b, a^{n_2}, b, \ldots)$ is derivable, and we call
$(n_1, n_2, \ldots)$ the derived sequence of $(a^{n_1}, b, a^{n_2}, b,
\ldots)$. For some sequences, this operation can be carried out over
and over again, generating derived sequences out of derived
sequences. We have the following theorem.

\begin{theorem}[Brown--Yin \cite{BrownYin}, Kwon \cite{Kwon}] \label{5the:BrownYinKwon}
  $\beta \in S$ if and only if $d (1, \beta)$ is derivable infinitely
  many times.
\end{theorem}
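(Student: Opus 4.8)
The plan is to reduce the symmetry condition $S_\beta^+ = S_\beta^-$ to a purely combinatorial statement about the language of the subshift, and then to recognise the derivation operation as the renormalisation governing that statement. Writing $\mathcal{L}$ for the set of finite factors of $S_\beta$, I would first note that appending a $0$ to an admissible word keeps it admissible, since the new suffixes begin with $0$ while $d_-(1,\beta)$ begins with a nonzero digit, so the condition in \eqref{5eq:Sbeta+} is preserved; likewise every prefix of an admissible word is admissible. Hence every factor is a prefix of some element of $S_\beta^+$, and $S_\beta^+$ is exactly the set of right-infinite sequences all of whose prefixes lie in $\mathcal{L}$. Unwinding the second description of $S_\beta^-$, a sequence $(b_1,b_2,\ldots)$ lies in $S_\beta^-$ precisely when every reversed prefix $b_n b_{n-1}\cdots b_1$ lies in $\mathcal{L}$. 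From these two descriptions, $S_\beta^+ = S_\beta^-$ is equivalent to $\mathcal{L}$ being closed under word reversal: the forward implication follows by applying the hypothesis to $w0^\infty$ for a factor $w$, and the converse is immediate. So the theorem becomes: the language of $S_\beta$ is mirror-invariant if and only if $d(1,\beta)$ is derivable infinitely many times.

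Next I would set up the renormalisation. Write $d = d_-(1,\beta)$, which equals $d(1,\beta)$ when the latter has infinitely many nonzero digits and otherwise is its quasi-greedy form; this substitution is harmless because $d$ is what determines $S_\beta^+$. For $1 < \beta < 2$ one has $d = 1^{n_1} 0\, 1^{n_2} 0 \cdots$, and shift-maximality $\sigma^k d \preceq d$ already forces every run of $1$'s to have length at most $n_1$. The key claim I would prove is a single-step equivalence: \emph{$\mathcal{L}$ is mirror-invariant if and only if every run length $n_k$ equals $n_1$ or $n_1-1$ (so that the derived sequence $(n_1,n_2,\ldots)$ is allowable) and the language of the derived shift is itself mirror-invariant.} Here the derived sequence is realised as the expansion $d_-(1,\beta')$ of a related $\beta$-shift via the sliding-block code that replaces each block $1^{j}0$ by the symbol $j$; since it again uses only two consecutive digits, it falls under the same analysis after relabelling, and reversal-invariance is transported through the code. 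Iterating the single-step equivalence shows that $\mathcal{L}$ is mirror-invariant exactly when every sequence in the chain $d, d', d'', \ldots$ of successive derived sequences is allowable, that is, exactly when $d(1,\beta)$ is derivable infinitely many times.

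The main obstacle is the single-step equivalence, and specifically the compatibility of the block code with reversal. Reading forward, each $0$ closes a block $1^{j}0$; reading backward, each $0$ opens a block, so the decomposition of a word and of its reverse match up only when the blocks take the constrained form dictated by allowability. This is exactly where the condition $b = a-1$ becomes unavoidable: if two run lengths differ by at least two, one can exhibit an admissible word whose reverse is forbidden (so mirror-invariance genuinely \emph{forces} the two consecutive values, rather than our assuming them). Care is also needed with the boundary blocks of a finite factor, where a run of $1$'s is not followed by a recorded $0$, with the degenerate run lengths $n_k = 0$ and the terminal case $n_k = \infty$, and with the passage from ``derivable $k$ times for every $k$'' to ``derivable infinitely many times,'' where the sequences terminating in $a^\infty$ correspond to subshifts of finite type, which are manifestly mirror-invariant and provide the base of the induction. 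Once the code is shown to intertwine reversal, the rest is the bookkeeping of these cases.
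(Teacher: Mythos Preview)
The paper does not prove this theorem; it is quoted from Brown--Yin \cite{BrownYin} (the ``only if'' direction) and Kwon \cite{Kwon} (the ``if'' direction) and used as a black box. So there is no in-paper proof to compare against.

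That said, your strategy is the natural one and is, in outline, the approach of those references. The reduction of $S_\beta^+ = S_\beta^-$ to reversal-invariance of the factor language is correct and cleanly argued. The renormalisation step---passing from $d_-(1,\beta)$ to its derived sequence via the block code $1^j 0 \mapsto j$---is exactly the mechanism underlying the derivation operation, and the fact that shift-maximality of $d$ transfers to shift-maximality of $(n_1,n_2,\ldots)$ is what lets the iteration make sense.

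Where your proposal remains a sketch is precisely where you flag it: the single-step equivalence. You assert that mirror-invariance forces every $n_k\in\{n_1,n_1-1\}$, and that, once this holds, mirror-invariance is transported by the block code; neither claim is actually established. The first is the substance of the Brown--Yin direction and requires an explicit construction of an admissible word with inadmissible reverse whenever some $n_k\le n_1-2$; this is not as immediate as your one-sentence remark suggests, and getting the right witness takes some care with how the prefix of $d$ interacts with its later blocks. The second claim needs attention at word boundaries: reversing a finite factor and then coding is not literally the same as coding and then reversing, since the leading and trailing partial blocks of $1$'s are handled asymmetrically. You also need to verify that the coded shift really is (after relabelling) again a $\beta'$-shift, i.e.\ that the derived sequence is of the form $d_-(1,\beta')$, including the edge cases you mention. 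These are the places where the cited papers do the real work, and your proposal would need to fill them in to become a proof.
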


The ``only if''-part was proved by Brown and Yin in \cite{BrownYin}
and the ``if''-part was proved by Kwon in \cite{Kwon}. Using this
characterisation of $S$, Brown and Yin proved that $S$ has the
cardinality of the continuum, but its Hausdorff dimension is zero.

There is a connection between numbers in $S$ and Sturmian
sequences. We will not make any use of the connection in this paper,
but refer the interested reader to Kwon's paper \cite{Kwon} for details.

For our main results in the next section, it is nice to know whether
$S$ contains numbers arbirarily close to $1$. The following
proposition is easily proved using Theorem~\ref{5the:BrownYinKwon}.

\begin{proposition} \label{5pro:inf}
  $\inf S = 1$.
\end{proposition}

\begin{proof}
  We prove this statement by explicitely choosing sequences $d (1,
  \beta)$ corresponding to numbers $\beta\in S$ arbitrarily close to $1$.
  We do this by first finding some sequences that are infinitely
  derivable, and then we find the corresponding $\beta$ by solving the
  equation $1 = \phi_\beta (d (1, \beta))$. Let us first remark that
  the sequence $(1, 0, 0, \ldots)$ is its own derived sequence. 
  
  The sequence $d (1, \beta) = (1, 1, 0, (1, 0)^\infty )$ is clearly
  derivable infinitely many times. It's derived sequence is $(2, 1, 1,
  \ldots)$, and the derived sequence of this sequence is $(1, 0, 0,
  \ldots)$.  One finds numerically that the corresponding $\beta$ is
  given by $\beta = 1.801938\ldots$ and that $1 / \beta =
  0.554958\ldots$

  There are however smaller numbers in the set $S$. Consider the
  sequence $d (1, \beta) = (1, 0, (1, 0, 0)^\infty )$. It's derived
  sequence is $(1, 1, 0, (1, 0)^\infty) )$, which derives to $(2, 1,
  1, \ldots)$, and so on. Solving for $\beta$ we find that $\beta =
  1.558980\ldots$ and $1 / \beta = 0.641445\ldots$ Now, for all natural $n$, let
  $\beta_n$ be such that
  \[
    d (1, \beta_n) = (1, 0^n, (1, 0^{n+1})^\infty).
  \]
  Then, for $n \geq 2$, the derived sequence of $d (1, \beta_n)$ is
  the sequence $d (1, \beta_{n-1})$. Hence all sequences $d (1,
  \beta_n)$ are infinitely derivable, and so $\beta_n \in S$. Moreover
  it is clear that $\beta_n \to 1$ as $n \to \infty$. See
  Table~\ref{5tab:S}.
\end{proof}

\begin{table}
  \begin{center}
    \begin{tabular}{r|l|l}
      $n$ & $\beta_n$ & $1 / \beta_n$ \\
      \hline
      1 & 1.558980\ldots & 0.641445\ldots \\
      2 & 1.438417\ldots & 0.695209\ldots \\
      3 & 1.365039\ldots & 0.732580\ldots \\
      4 & 1.315114\ldots & 0.760390\ldots \\
      5 & 1.278665\ldots & 0.782066\ldots \\
    \end{tabular}
  \end{center}
  \caption{Some numerical values of $\beta_n$.}
  \label{5tab:S}
\end{table}

\section{Results} \label{5sec:results}

Let $0 < \lambda < 1$ and $1<\beta < 2$. Put $Q = [0, 1) \times [0,
    1)$ and define $T_{\beta,\lambda} \colon Q \to Q$ by
\[
  T_{\beta, \lambda} (x, y) = \left\{ \begin{array}{ll} (\lambda x,
    \beta y) & \text{if } y < 1 / \beta \\ (\lambda x + 1 - \lambda,
    \beta y - 1) & \text{if } y \geq 1 / \beta \end{array} \right. .
\]
Denote by $\nu$ the 2-dimensional Lebesgue measure on $Q$. For any $n
\in \N$ we define the measure
\[
  \nu_n = \frac{1}{n} \sum_{k = 0}^{n-1} \nu \circ T_{\beta,
    \lambda}^{-n}.
\]
The \textsc{srb}-measure (it is unique as noted below) of $T_{\beta,
  \lambda}$ is the weak limit of $\nu_n$ as $n \to \infty$.

The \textsc{srb}-measures are characterised by the property that their
conditional measures along unstable manifolds are equivalent to
Lebesgue measure. The existence of such measures was established for
invertible maps by Pesin \cite{Pesin} and extended to non-invertible
maps by Schmeling and Troubetzkoy \cite{Schmeling}. We denote the
\textsc{srb}-measure of $T_{\beta, \lambda}$ by
$\mu_\textsc{srb}$. Using the Hopf-argument used by Sataev in
\cite{Sataev} one proves that the \textsc{srb}-measure is
unique. (Sataev's paper is about a somewhat different map, but the
argument goes through without changes.) 

The support of $\mu_\textsc{srb}$ is the set
\[
  \Lambda = \closure \bigcap_{n = 0}^\infty T_{\beta, \lambda}^n (Q)
\]
of which we have examples in Figure \ref{5fig:attractorbeta12lambda08}.
\begin{figure}
  \hspace{\stretch{1}}
  \includegraphics[width=0.35\textwidth]{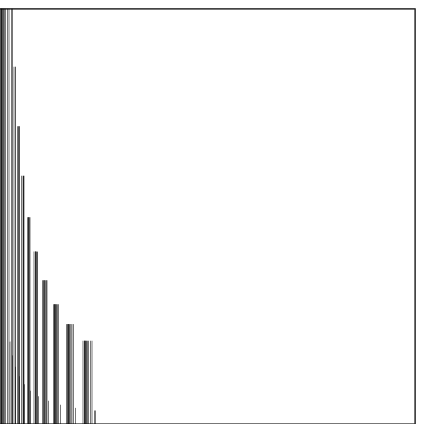} \hspace{\stretch{1}}
  \includegraphics[width=0.35\textwidth]{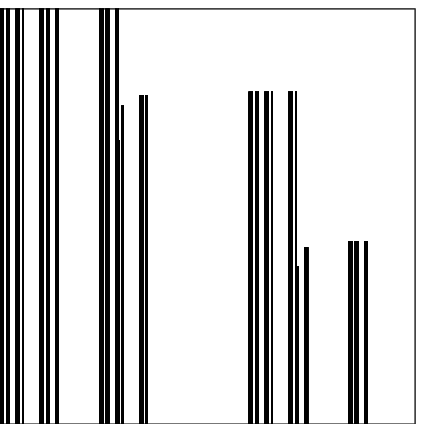} \hspace{\stretch{1}}
  \caption{The set $\Lambda$ for $\beta=1.2$ and $\lambda=0.8$ (left) and $\beta=1.8$ and $\lambda=0.4$ (right).}
  \label{5fig:attractorbeta12lambda08}
\end{figure}
One can estimate the dimension from above by covering the set
$\Lambda$ with the natural covers, consisting of the pieces of
$T_{\beta, \lambda}^n (Q)$. This gives us the upper bound, that the
Hausdorff dimension of $\Lambda$ is at most $1 + \frac{\log
  \beta}{\log 1/\lambda}$. If $\lambda \beta > 1$ this is a trivial
estimate, since then $1 + \frac{\log \beta}{\log 1/\lambda} > 2$. 

The following theorem states that in the case when $\lambda \beta<1$, there is a set of parameters of positive Lebesgue measure for which the estimate above is optimal.

\begin{theorem} \label{5the:dimension}
  Let $1 < \beta < 2$ and $\gamma=\inf\{\, \beta'\in S : \beta' \geq
  \beta \, \}$. Then for Lebesgue almost every $\lambda\in (0,1/\gamma)$
  the Hausdorff dimension of the \textsc{srb}-measure of $T_{\beta,
    \lambda}$ is $1 + \frac{\log \beta}{\log 1/\lambda}$.
\end{theorem}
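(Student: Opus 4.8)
The upper bound $\dim\Lambda\le 1+\frac{\log\beta}{\log 1/\lambda}$ is already available, and $\mu_{\textsc{srb}}$ is supported on $\Lambda$, so it remains to prove the lower bound for almost every $\lambda\in(0,1/\gamma)$. First I would describe $\mu_{\textsc{srb}}$ through the coding by the natural extension $S_\beta$. A bi-infinite admissible sequence $(\ldots,a_{-1},a_0,a_1,\ldots)$ codes the point $(x,y)$ with $y=\phi_\beta(a_1,a_2,\ldots)$ and, by iterating the contraction in the first coordinate,
\[
  x = (1-\lambda)\sum_{k=0}^\infty a_{-k}\lambda^{k}.
\]
Thus the first coordinate depends only on the reversed past $(a_0,a_{-1},\ldots)$, which ranges exactly over $S_\beta^-$. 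Since the $y$-direction is the unstable one and an \textsc{srb}-measure has conditionals along unstable manifolds equivalent to Lebesgue measure, $\mu_{\textsc{srb}}$ has a local product structure: its projection to the $y$-axis is the Parry measure $\mu_\beta$ (equivalent to Lebesgue, hence of dimension $1$), while the transverse measure on the $x$-axis is the distribution $\eta_\lambda$ of $x(b)=(1-\lambda)\sum_{k\ge1}b_k\lambda^{k-1}$, where $(b_k)_{k\ge1}\in S_\beta^-$ is distributed according to the past-marginal $\rho$ of the lifted Parry measure. By an exact-dimensionality (Ledrappier--Young type) argument this reduces the theorem to showing $\dim\eta_\lambda\ge\frac{\log\beta}{\log 1/\lambda}$ for almost every $\lambda\in(0,1/\gamma)$.

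To bound $\dim\eta_\lambda$ from below I would use the energy method of Peres and Solomyak. Fix a compact interval $J=[\lambda_0,\lambda_1]\subset(0,1/\gamma)$ and $s<1$, and estimate the averaged $s$-energy
\[
  \int_J \iint \frac{d\eta_\lambda(x)\,d\eta_\lambda(x')}{|x-x'|^{s}}\,d\lambda
  = \iint\Bigl(\int_J |x(b)-x(b')|^{-s}\,d\lambda\Bigr)\,d\rho(b)\,d\rho(b')
\]
by Fubini. For a pair $b\ne b'$ let $m$ be the first index at which they differ; since $1<\beta<2$ the digits lie in $\{0,1\}$, so $b_m-b_m'=\pm1$, and factoring out the leading term gives
\[
  x(b)-x(b') = \pm(1-\lambda)\lambda^{m-1}g(\lambda),\qquad
  g(\lambda)=1+\sum_{j\ge1}c_j\lambda^{j},\ c_j\in\{-1,0,1\}.
\]
The power series $g$ is a normalised difference of two tails of sequences in $S_\beta^-$, and this is precisely the class controlled by the transversality property of Section~\ref{5sec:transversality}.

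Two inputs make the sum converge. First, transversality on $(0,1/\gamma)$ forces the sublevel sets $\{\lambda\in J:|g(\lambda)|\le u\}$ to have length $O(u)$ uniformly over the admissible $g$, whence $\int_J|g(\lambda)|^{-s}\,d\lambda\le C$ with $C$ independent of $g$ (using $s<1$); together with $(1-\lambda)^{-s}\le(1-\lambda_1)^{-s}$ and $\lambda\ge\lambda_0$ this yields $\int_J|x(b)-x(b')|^{-s}\,d\lambda\le C'\lambda_0^{-(m-1)s}$. Second, since the Parry measure has density bounded above, every cylinder of length $m-1$ has $\rho$-measure at most $C''\beta^{-(m-1)}$, so the $\rho\otimes\rho$-measure of the pairs agreeing on their first $m-1$ digits is at most $C''\beta^{-(m-1)}$. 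Combining,
\[
  \int_J \iint \frac{d\eta_\lambda\,d\eta_\lambda}{|x-x'|^{s}}\,d\lambda
  \le C'C''\sum_{m\ge1}\bigl(\lambda_0^{-s}\beta^{-1}\bigr)^{m-1},
\]
which is finite exactly when $s<\frac{\log\beta}{\log 1/\lambda_0}$ (and this bound is automatically below $1$ because $\lambda_0<1/\beta$). Hence for such $s$ the $s$-energy is finite, so $\dim\eta_\lambda\ge s$, for almost every $\lambda\in J$. A routine exhaustion by countably many intervals with rational endpoints, letting $\lambda_0\uparrow\lambda$, upgrades this to $\dim\eta_\lambda\ge\frac{\log\beta}{\log 1/\lambda}$ for almost every $\lambda\in(0,1/\gamma)$, completing the lower bound.

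The main obstacle is the transversality step. Solomyak's transversality \eqref{5eq:solomyakstrans} holds only on $(0,0.64)$, which falls far short of $(0,1/\gamma)$ when $\beta$ is close to $1$. The crucial point is that here $g$ is not an arbitrary power series with coefficients in $\{-1,0,1\}$ but a difference of tails of sequences in $S_\beta^-$. Since $\beta\le\gamma$ and $\gamma\in S$, Theorem~\ref{5the:BrownYinKwon} gives $S_\beta^-\subseteq S_\gamma^-=S_\gamma^+$, so these sequences obey the admissibility constraint $\sigma^k(b)\le d_-(1,\gamma)$ for all $k$. This restricts the admissible coefficient patterns of $g$ enough that the transversality property survives on all of $(0,1/\gamma)$; establishing this extended transversality is the technical heart of the argument and is carried out in Section~\ref{5sec:transversality}. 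A secondary point that requires care is the justification of the exact-dimensional product structure used to pass from $\eta_\lambda$ back to $\mu_{\textsc{srb}}$.
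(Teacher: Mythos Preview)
Your approach mirrors the paper's: symbolic coding through $S_\beta$, the Peres--Solomyak energy integral, factoring $x(b)-x(b')$ as $\pm(1-\lambda)\lambda^{m-1}g(\lambda)$, invoking the transversality lemma of Section~\ref{5sec:transversality}, bounding cylinders via \eqref{5eq:measureestimate}, summing the resulting geometric series, and exhausting $(0,1/\gamma)$ by compact subintervals. There is, however, one genuine gap. You assert $\gamma\in S$ in order to conclude $S_\beta^-\subseteq S_\gamma^-=S_\gamma^+$ and then apply Lemma~\ref{5lem:transversality} with~$\gamma$. The paper never claims $S$ is closed and does not work with $\gamma$ directly. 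Instead it fixes an \emph{arbitrary} $\beta'\in S$ with $\beta'\ge\beta$, notes that the reversed-past sequences lie in $S_\beta^-\subseteq S_{\beta'}^-=S_{\beta'}^+$, applies Lemma~\ref{5lem:transversality} with~$\beta'$ to obtain the result for a.e.\ $\lambda\in(0,1/\beta')$, and only at the very end lets $\beta'$ range over $S\cap[\beta,\infty)$ so that $1/\beta'\uparrow 1/\gamma$. Your compact $J=[\lambda_0,\lambda_1]\subset(0,1/\gamma)$ can be handled the same way (choose $\beta'\in S$ with $1/\beta'>\lambda_1$), so the repair is routine; but as written the assertion $\gamma\in S$ is unjustified. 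Incidentally, $S_{\beta'}^-=S_{\beta'}^+$ is the \emph{definition} of $\beta'\in S$, not a consequence of Theorem~\ref{5the:BrownYinKwon}.

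A secondary difference concerns the step from $\dim\eta_\lambda$ to $\dim\mu_{\textsc{srb}}$. You invoke a Ledrappier--Young type exact-dimensionality, which you correctly flag as needing care. The paper avoids this: it restricts $\mu_{\textsc{srb}}$ to a set $\Omega_\delta$ of positive measure (defined symbolically, hence $\lambda$-independent) on which every local unstable manifold has length at least $\delta$. On $\Omega_\delta$ the measure is visibly a product of Lebesgue in the $y$-direction with the projected measure, so finiteness of the $t$-energy of the projection yields $\dim\mu_{\textsc{srb}}\ge 1+t$ directly. This is lighter than the machinery you propose, though either route would work.
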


Recall from Proposition \ref{5pro:inf} that $\inf S=1$. This implies
that when $\beta$ gets close to 1, Theorem \ref{5the:dimension} gives
the dimension of the \textsc{srb}-measure for a large set of
$\lambda>0.64$, which is not obtainable using Solomyak's
transversality from \cite{Solomyak}, described in the introduction.

In the area-expanding case, when $\lambda\beta>1$, we have the
following theorem.

\begin{theorem}\label{5the:abscont}
  For any $\gamma\in S$, there is an $\varepsilon>0$ such
  that for all $\beta$ with $1/\beta \in [1/ \gamma, 1/\gamma +
    \varepsilon)$, and Lebesgue almost every $\lambda \in (1/\beta,
    1/\gamma + \varepsilon)$ the \textsc{srb}-measure of $T_{\beta,
      \lambda}$ is absolutely continuous with respect to Lebesgue
    measure.
\end{theorem}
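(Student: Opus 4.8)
The plan is to realise the \textsc{srb}-measure as the image of the natural-extension measure on $S_\beta$, and to reduce absolute continuity to a property of the transverse (fibre) measures in the contracting $x$-direction, which I then establish for almost every $\lambda$ by the transversality method of Peres and Solomyak \cite{PeresSolomyak}.

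First I would write $\mu_\textsc{srb}$ in symbolic coordinates. Since $x$ is contracted and $y$ is expanded, the unstable direction is the $y$-direction and the conditional measures of $\mu_\textsc{srb}$ along unstable fibres are equivalent to Lebesgue measure, consistent with $\mu_\beta \ll \text{Leb}$. Tracing the forward orbit, a point of the attractor over a fibre coded by the future $(b_1, b_2, \ldots) \in S_\beta^+$ has $x$-coordinate $(1-\lambda)\sum_{j \geq 0} b_{-j} \lambda^{j}$, where the past $(b_0, b_{-1}, \ldots)$ ranges over $S_\beta^-$. Thus $\mu_\textsc{srb} = (\Pi_\lambda, \phi_\beta)_* \mathbb{P}$, where $\mathbb{P}$ is the $\sigma$-invariant measure on $S_\beta$ projecting to $\mu_\beta$, the map $\phi_\beta$ reads the future into $y$, and $\Pi_\lambda$ reads the past into $x$. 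The transverse measures are therefore self-similar measures with ratio $\lambda$ whose digit sequences are constrained to lie in $S_\beta^-$. The hypotheses $\beta \leq \gamma$ and $\gamma \in S$ give, by monotonicity of the $\beta$-shifts and Theorem~\ref{5the:BrownYinKwon}, the inclusion $S_\beta^- \subseteq S_\gamma^- = S_\gamma^+$, so every admissible past is $S_\gamma$-admissible; this restriction is precisely what enlarges the usable range of $\lambda$.

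Next I would reduce absolute continuity of the two-dimensional measure $\mu_\textsc{srb}$ to finiteness of the lower density of the transverse measures. Because the base measure $\mu_\beta$ is absolutely continuous and $\mu_\textsc{srb}$ respects the skew-product structure, it suffices, following \cite{PeresSolomyak}, to show that
\[
\sup_{0 < r < 1} \frac1r \int_{1/\beta}^{1/\gamma + \varepsilon} (\mathbb{P} \times \mathbb{P}) \{\, (\omega, \omega') : |\Pi_\lambda(\omega) - \Pi_\lambda(\omega')| < r \,\} \, d\lambda < \infty,
\]
and then invoke Fatou's lemma together with the differentiation theorem for measures to conclude $\mu_\textsc{srb} \ll \text{Leb}$ for almost every $\lambda$ in the range. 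Writing $\Pi_\lambda(\omega) - \Pi_\lambda(\omega') = (1-\lambda)\lambda^{m} g(\lambda)$, where $m$ is the index of the first disagreement of the pasts and $g(\lambda) = 1 + \sum_{k \geq 1} c_k \lambda^{k}$ with $c_k \in \{-1, 0, 1\}$ an admissible difference series, the whole estimate rests on the bound $\text{Leb}\{\, \lambda : |g(\lambda)| < t \,\} \leq C t$.

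The decisive step is the transversality lemma of Section~\ref{5sec:transversality}: for $\gamma \in S$ the difference series $g$ coming from $S_\gamma$-admissible sequences satisfy the transversality estimate on an interval $(0, \rho(\gamma))$ with $\rho(\gamma)$ strictly larger than $1/\gamma$, which is what the digit restriction buys us beyond Solomyak's bound of $0.64$. Setting $\varepsilon = \rho(\gamma) - 1/\gamma > 0$ and using $1/\beta \leq 1/\gamma$, the entire range $(1/\beta, 1/\gamma + \varepsilon)$ lies inside the transversality interval, so $\text{Leb}\{\, \lambda : |g(\lambda)| < t \,\} \leq C t$ holds uniformly; substituting, swapping the order of integration over $(\omega, \omega')$ and $\lambda$, and summing the resulting geometric series in $m$ yields the required finite bound. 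The condition $\lambda > 1/\beta$, that is $\lambda\beta > 1$, is exactly the area-expanding regime in which the transverse similarity dimension exceeds $1$ and absolute continuity, rather than a dimension drop, is the correct conclusion.

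I expect the main obstacle to be twofold. First, the coupling between past and future under $\mathbb{P}$: since $f_\beta$ is not invertible, the past is not an independent Bernoulli sequence, and I must check that the double-integral estimate survives this dependence, using that $\mathbb{P}$ projects to the absolutely continuous Parry measure and that the transversality bound depends only on the difference series. Second, verifying that every difference series which actually occurs is genuinely $S_\gamma$-admissible, and hence covered by the extended transversality lemma; this is the point at which Theorem~\ref{5the:BrownYinKwon} and the inclusion $S_\beta^- \subseteq S_\gamma^+$ are essential, and where the hypothesis $\gamma \in S$ cannot be dropped.
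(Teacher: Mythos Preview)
Your proposal is correct and follows essentially the same route as the paper: both code $\mu_\textsc{srb}$ via the natural extension, project to the contracting direction, apply the Peres--Solomyak lower-density argument with Fatou and a swap of integration order (using that the symbolic measure $\eta$ is independent of $\lambda$), invoke the transversality Lemma~\ref{5lem:transversality} at $\gamma$, and sum the resulting geometric series $\sum (c\beta)^{-k}$, which converges precisely when $\lambda\beta>1$. Your two anticipated obstacles are exactly the two nontrivial points the paper addresses---the first via the cylinder bound $\eta([a_{-n},\ldots,a_0])\le K\beta^{-(n+1)}$ coming from the Parry measure, and the second via $S_\beta^- \subseteq S_\gamma^- = S_\gamma^+$, which is where $\gamma\in S$ enters (note that Lemma~\ref{5lem:transversality} itself does not require $\gamma\in S$; that hypothesis is used only to place the past sequences into $S_\gamma^+$).
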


Since $\inf S=1$ by Proposition~\ref{5pro:inf}, there are $\beta$
arbitrarily close to 1 for which we have a set of $\lambda$ of
positive Lebesgue measure, where the \textsc{srb}-measure is
absolutely continuous. In particular, this means that for these
parameters, the set $\Lambda$ has positive 2-dimensional Lebesgue
measure.

Let us comment on the relation between Theorem~\ref{5the:abscont} and
the results of Brown and Yin in \cite{BrownYin}. Brown and Yin
considers any $\beta > 1$. In the case $1 < \beta < 2$ their result is
the following. They consider the map
\[
  (x, y) \mapsto \left\{ \begin{array}{ll} (\frac{1}{\beta} x, \beta
  y) & \text{if } y < \frac{1}{\beta}, \\ \rule{0pt}{13pt}
  (\frac{1}{\beta} x + \frac{1}{\beta}, \beta y - 1) & \text{if } y
  \geq \frac{1}{\beta}. \end{array} \right.
\]
Hence their map is similar to ours when $\lambda =
\frac{1}{\beta}$. They proved that the Lebesgue measure restricted to
the set $\Lambda$ is invariant if  $\beta \in
S$.

\section{Transversality} \label{5sec:transversality}

The main results of this paper, Theorem \ref{5the:dimension} and
Theorem \ref{5the:abscont}, only deal with $1<\beta<2$. However, the
arguments in this section work just as well for larger $\beta$, so for
the rest of this section we will be working with a fixed $\beta>1$.

Consider the set of power series of the form
\begin{equation} \label{5eq:powerseries}
  g (x) =1+ \sum_{k = 1}^\infty (a_k - b_k) x^k,
\end{equation}
where $(a_1, a_2, \dots)$ and
$ (b_1, b_2, \dots)$ are sequences in
$S_\beta^+$.

\begin{lemma}\label{5lem:transversality}
  There exist $\varepsilon > 0$ and $\delta > 0$ such that for any
  power series $g$ of the form \eqref{5eq:powerseries}, $x \in [0, 1/
    \beta + \varepsilon]$ and $| g(x) | < \delta$ implies that $g' (x)
  < - \delta$.
\end{lemma}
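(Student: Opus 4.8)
The plan is to adapt the extremal-function method of Peres and Solomyak \cite{PeresSolomyak}, using the admissibility built into $S_\beta^+$ to push the transversality interval past $1/\beta$. Since $1<\beta<2$ we have $[\beta]=1$, so every digit of a sequence in $S_\beta^+$ lies in $\{0,1\}$ and every coefficient $c_k=a_k-b_k$ in \eqref{5eq:powerseries} lies in $\{-1,0,1\}$. Thus each $g$ is a power series of exactly Solomyak's type, but subject to the extra constraint that $(a_k)$ and $(b_k)$ both lie in $S_\beta^+$. First I would note that the class $\mathcal{G}$ of such power series is compact in the topology of coefficientwise convergence: the coefficients are uniformly bounded, and by \eqref{5eq:Sbeta+} membership in $S_\beta^+$ is a closed condition. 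This compactness is exactly what upgrades qualitative transversality (a strictly negative derivative at each zero) to the quantitative $\delta$-version asserted in the lemma.

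The geometric content is the location of the smallest $x_\ast=x_\ast(\beta)$ at which some $g\in\mathcal{G}$ can be tangent to the axis, that is, satisfy $g(x_\ast)=0$ and $g'(x_\ast)\ge 0$; the goal is to prove $x_\ast>1/\beta$. To make $g$ descend as fast as possible one takes $a_k=0$ and $(b_k)$ as large as admissibility allows, namely $(b_k)=d_-(1,\beta)$, giving the extremal descender $g_\ast(x)=1-\sum_{k\ge1}d_k(1,\beta)x^k$. Because $\phi_\beta(d_-(1,\beta))=1$, this $g_\ast$ vanishes precisely at $x=1/\beta$, and there $g_\ast'(1/\beta)=-\beta\sum_{k\ge1}k\,d_k(1,\beta)\beta^{-k}<0$, so the fastest descent crosses the axis transversally at $1/\beta$ rather than tangentially. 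The decisive role of admissibility is that the unconstrained fastest descender $1-\sum_{k\ge1}x^k=(1-2x)/(1-x)$, whose root sits at $1/2$, is unavailable: the all-ones sequence is not in $S_\beta^+$ when $\beta<2$, since $\phi_\beta(1,1,\ldots)=1/(\beta-1)>1$. The admissibility condition $\sigma^k(b)\le d_-(1,\beta)$ caps the runs of $1$'s in $(b_k)$ in terms of the leading block of $d_-(1,\beta)$, and the same bound applies to $(a_k)$; this is what lifts both the crossing point of $g_\ast$ and the first tangency point $x_\ast$ above $1/\beta$.

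With a bound $x_\ast>1/\beta$ established, I would fix $\varepsilon>0$ so small that $1/\beta+\varepsilon<x_\ast$ and finish by the standard compactness reduction. If the $\delta$-transversality failed for every $\delta$, then choosing $\delta_n\to 0$ would produce $g_n\in\mathcal{G}$ and $x_n\in[0,1/\beta+\varepsilon]$ with $|g_n(x_n)|<\delta_n$ and $g_n'(x_n)\ge-\delta_n$; extracting a convergent subsequence of the $g_n$ in $\mathcal{G}$ and of the $x_n$ in the compact interval, and using that coefficientwise convergence forces uniform convergence of $g_n$ and $g_n'$ on compact subsets of $(-1,1)$, yields a limit $g\in\mathcal{G}$ and $x_0\le 1/\beta+\varepsilon<x_\ast$ with $g(x_0)=0$ and $g'(x_0)\ge0$, contradicting the definition of $x_\ast$.

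The step I expect to be the genuine obstacle is proving $x_\ast>1/\beta$, i.e.\ ruling out tangencies of every admissible power series up to slightly past $1/\beta$. Following Peres and Solomyak, this should reduce to examining a restricted family of ``$(\ast)$-functions'' --- those whose coefficients descend as steeply as admissibility permits and then ascend --- but here the family must consist only of sequences realizable as differences of two $S_\beta^+$-sequences, so the combinatorial bound on runs of the digit $1$ imposed by $d_-(1,\beta)$ must be converted into an effective lower bound on the smallest double root, uniformly in the relevant $\beta$. The symmetric $\beta$-shifts of Section~\ref{5sec:symetric}, where $S_\beta^+=S_\beta^-$, are presumably the setting in which this realizability analysis, and hence the estimate $x_\ast>1/\beta$, is cleanest.
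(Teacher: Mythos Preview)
Your compactness reduction is correct and matches the paper's setup, but the decisive step---showing that every $g\in\mathcal G$ with a zero in $[0,1/\beta]$ has strictly negative derivative there---is left open; you flag it as ``the genuine obstacle'' and propose a $(*)$-function analysis \`a la Peres--Solomyak without carrying it out. The paper bypasses this obstacle with a one-line argument you missed. Write $g(x)=1+h_1(x)-h_2(x)$ with $h_1(x)=\sum a_kx^k$ and $h_2(x)=\sum b_kx^k$. If $x_0\in(0,1/\beta]$, set $\beta_0=1/x_0\ge\beta$; then $(a_k),(b_k)\in S_\beta^+\subset S_{\beta_0}^+$, so $h_1(x_0)=\phi_{\beta_0}(a)$ and $h_2(x_0)=\phi_{\beta_0}(b)$ both lie in $[0,1]$. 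The equation $1+h_1(x_0)-h_2(x_0)=0$ with $h_1(x_0),h_2(x_0)\in[0,1]$ forces $h_1(x_0)=0$ and $h_2(x_0)=1$, hence $a_k=0$ for every $k$, and $g'(x_0)=-h_2'(x_0)<0$. No extremal-function or double-root analysis is needed; your observation about $g_\ast$ is a special case of this, not a step toward it.

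For the extension to $(1/\beta,1/\beta+\varepsilon]$ the paper continues with the same decomposition rather than locating a first tangency $x_\ast$: since $h_2(1/\beta)\le 1$ and $h_2'$ is bounded, $h_2(x_0)\le 1+C\varepsilon$; from $g(x_0)=0$ this gives $h_1(x_0)\le C\varepsilon$, which (as the terms of $h_1$ are nonnegative) forces $a_i=0$ for $i\le k$ with $k\asymp|\log\varepsilon|$. Then $h_1'(x_0)=O(\varepsilon k)$ while $h_2'(x_0)\ge h_2(x_0)/x_0\ge 1/x_0$ by convexity of $h_2$, so $g'(x_0)<0$ for $\varepsilon$ small enough, and the paper extracts an explicit bound on $\varepsilon$. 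Finally, your suspicion that the symmetric condition $S_\beta^+=S_\beta^-$ is needed here is misplaced: the lemma is stated and proved for every $\beta>1$, and the set $S$ of Section~\ref{5sec:symetric} enters only in the applications, Theorems~\ref{5the:dimension} and~\ref{5the:abscont}.
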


\begin{proof}
  Let 
  \begin{equation}\label{5eq:epsilon}
  0<\varepsilon < \min \bigg\{\frac{1-1/\beta}{2}, \frac{1}{[ \beta]} \bigg\}
  \end{equation}
   and assume that no such $\delta$ exists. We will
  show that if $\varepsilon$ is too small, then we get a
  contradiction.
   
  By assumption, there is a sequence $g_n$ of power series of the form
  \eqref{5eq:powerseries} and a sequence of numbers $x_n \in [0, 1 /
    \beta+\varepsilon]$, such that $\lim_{n \to \infty} g_n (x_n) = 0$
  and $\liminf_{n \to \infty} g_n' (x_n) \geq 0$. We can take a
  subsequence such that $g_n$ converges term-wise to a series
  \[
  g (x) = 1+ \sum_{k = 1}^\infty (a_k - b_k) x^k
  \]
  with $(a_1, a_2,
  \dots), (b_1, b_2, \dots) \in S_\beta^+$, and such that $x_n$ converges to
  some number $x_0\in [0, 1 / \beta+\varepsilon]$. Clearly, $g(x_0)=0$
  and $g'(x_0) \geq 0$, so looking at \eqref{5eq:powerseries} we note
  that $x_0\neq0$.

  Assume first that $x_0 \in (0,1/\beta]$.  Let $\beta_0 = 1 / x_0
  \geq \beta$. Then $g (x_0) = 0$ and $(a_1, a_2, \dots), (b_1, b_2,
  \dots) \in S_{\beta_0}^+$ implies that
  \begin{equation}\label{5eq:extremalsums}
    \phi_{\beta_0} (a_1, a_2, \dots) - \phi_{\beta_0} (b_1, b_2, \dots)
    = \sum_{k = 1}^\infty \frac{a_k}{\beta_0^k} - \sum_{k = 1}^\infty
    \frac{b_k}{\beta_0^k} = - 1.
  \end{equation}
  Both of the sums in \eqref{5eq:extremalsums} are in $[0, 1]$, since
  they equal $\phi_{\beta_0} (a_1, a_2, \dots)$ and $\phi_{\beta_0}
  (b_1, b_2, \dots)$ respectively. We conclude that
  \[
    \sum_{k = 1}^\infty \frac{a_k}{\beta_0^k} = 0 \quad \text{and}
    \quad \sum_{k = 1}^\infty \frac{b_k}{\beta_0^k} = 1.
  \]
  We must therefore have $(a_1, a_2, \ldots) = (0, 0, \ldots)$, and
  $b_k$ must be nonzero for at least some $k$. From
  \eqref{5eq:powerseries} we then get $g ' (x)= -\sum_{k=1}^{\infty}k
  b_kx^{k-1}<0$ for all $x \in (0, 1/ \beta]$, contradicting the fact
  that $g' (x_0) \geq 0$.

  Assume instead that $x_0 \in (1/\beta, 1/\beta + \varepsilon]$. We
  write
  \begin{equation} \label{5eq:twofunctions}
    g (x) =1+h_1(x)-h_2(x),
  \end{equation}
  where
  \begin{equation}\label{5eq:h1h2}
    h_1 (x) = \sum_{k = 1}^\infty a_k x^k \quad \text{and} \quad
    h_2(x)=\sum_{k = 1}^\infty b_k x^k.
  \end{equation}
  Since $(b_1, b_2, \dots) \in S_\beta^+$, we have $h_2 (1/\beta) \leq
  1$. Moreover, for $x\geq 0$ we have $0 \leq h_2' (x) \leq \sum_{k = 1}^\infty [
    \beta ] k x^{k - 1} = \frac{[\beta] }{(1 - x)^2}$. Therefore we
  have
  \begin{equation}\label{5eq:h2}
    h_2 (x_0) \leq 1 + \int_{1/ \beta}^{1/ \beta + \varepsilon}
    \frac{[\beta]\ }{(1 - x)^2} \mathrm{d}x= 1 + \frac{[\beta]
      \varepsilon}{(1 - 1/ \beta - \varepsilon)(1 - 1/ \beta)}.
  \end{equation}
  Since $g(x_0)=0$ we see from \eqref{5eq:twofunctions} and
  \eqref{5eq:h2} that
  \[
  h_1 (x_0) \leq \frac{[\beta] \varepsilon}{(1 - 1/ \beta -
    \varepsilon)(1 - 1/ \beta)}.
  \] 
  If we have $\frac{[\beta] \varepsilon}{(1 - 1/ \beta -
    \varepsilon)(1 - 1/ \beta)}\geq x_0$, then let $k=0$. Otherwise, let
  $k$ be the largest integer such that $x_0^k >
  \frac{[\beta]\varepsilon}{(1 - 1/ \beta - \varepsilon)(1 - 1/
    \beta)}$. Since $h_1(x)$ is of the form \eqref{5eq:h1h2} and all
  its terms are non-negative we must have $a_i = 0$ for $i \leq
  k$. This implies that
  \begin{equation}\label{5eq:h1prim}
    h_1' (x) \leq \sum_{i = k + 1}^\infty [\beta] i x^{i - 1} \leq
    [\beta]\frac{(k+1) x^k + k x^{k + 1}}{(1 - x)^2} = x^{k+1} [\beta]
    \frac{k + 1 + kx}{x (1 - x)^2}.
  \end{equation}
  By the maximality of $k$, we have $x_0^{k+1} \leq
  \frac{[\beta]\varepsilon}{(1 - 1/ \beta - \varepsilon)(1 - 1/
    \beta)}$, so \eqref{5eq:h1prim} and \eqref{5eq:epsilon} implies
  \begin{equation}\label{5eq:h1primnew}
    h_1' (x_0) \leq \frac{[\beta]^2\varepsilon}{(1 - 1/ \beta -
      \varepsilon)(1 - 1/ \beta)} \frac{k + 1 + k x_0}{x_0 (1 -
      x_0)^2}\leq \frac{[\beta]^2\varepsilon(2k + 1)}{(1 - 1/ \beta -
      \varepsilon)^4x_0}.
  \end{equation}
  To estimate $h_2' (x_0)$ from below, we note that since $h_2(x)$ is
  of the form \eqref{5eq:h1h2}, we must have $h_2''(x)\geq 0$ for all
  $x$. We also have $h_2(x_0)\geq 1$ since $0=g(x_0)=h_1(x_0)-h_2(x_0)$. Since $h_2 (0) = 0$, this implies
  \begin{equation}\label{5eq:h2prim}
    h_2' (x_0) \geq \frac{h_2 (x_0)}{x_0} \geq \frac{1}{x_0}.
  \end{equation}

  Now, if we can choose $\varepsilon$ so small that $g'(x_0)=h_1'
  (x_0) - h_2' (x_0) < 0$, we get a contradiction to the fact that
  $g'(x_0)\geq 0$. By \eqref{5eq:h1primnew} and \eqref{5eq:h2prim} we
  see that it is enough to choose $\varepsilon$ so small that
  \[
     \frac{[\beta]^2\varepsilon(2k + 1)}{(1 - 1/ \beta -
       \varepsilon)^4x_0} - \frac{1}{x_0}<0 \quad \Longleftrightarrow
     \quad \varepsilon < \frac{(1 - 1/ \beta -
       \varepsilon)^4}{[\beta]^2(2k+1)}.
  \]
  So, by \eqref{5eq:epsilon} it is sufficient to choose
  \begin{equation}\label{5eq:epsilonbound}
  \varepsilon < \frac{(1- 1 / \beta)^4}{2^4[\beta]^2(2
    k + 1)}.
  \end{equation}
  To get a bound on $k$ recall that by definition, either $k=0$ or it
  satisfies
  \[
  x_0^k > \frac{[\beta] \varepsilon}{(1 - 1/ \beta - \varepsilon)(1
    - 1/ \beta)}.
  \]
  By \eqref{5eq:epsilon}  we get
  \begin{align*}
    k &< \frac{\log( [\beta] \varepsilon)-
      \log(1-1/\beta-\varepsilon)-\log(1-1/\beta)}{\log(x_0)} \\ &<
    \frac{\log( [\beta] \varepsilon)}{\log(1/\beta+\varepsilon)}\leq
    \frac{\log ([\beta]\varepsilon)}{\log(\frac{1+1/\beta}{2})}.
  \end{align*}
  Inserting this estimate into \eqref{5eq:epsilonbound}, we get the
  sufficient condition
  \begin{equation}\label{5eq:finalestimate}
    \varepsilon < \frac{ \bigl( 1 - 1 / \beta
      \bigr)^4}{2^4[\beta]^2\frac{2 \log( [\beta]\varepsilon)}{\log
        \frac{1 + 1 / \beta}{2} } + 2^4[\beta]^2} \Leftrightarrow
    \frac{2^5[\beta]^2}{\log \frac{1 + 1 / \beta}{2} } \varepsilon \log
    ( [\beta] \varepsilon) + 2^4[\beta]^2\varepsilon < \Bigl( 1 - 1 /
    \beta \Bigr)^4.
  \end{equation}
  But $\varepsilon \log \varepsilon \to 0$ as $\varepsilon$ shrinks to
  $0$, so it is clear that we can find an $\varepsilon>0$ satisfing
  \eqref{5eq:finalestimate}.
\end{proof}

\begin{remark}
  Let us give an explicit formula for which $\varepsilon$ we can
  choose in the case $1<\beta<2$. For such $\beta$ we have $[ \beta ]
  =1$. By \eqref{5eq:epsilon} we have $\varepsilon\leq
  \frac{1-1/\beta}{2}$, so it follows that $\varepsilon \leq \frac{-
    \varepsilon \log \varepsilon}{\log \frac{2}{1 - 1 / \beta}}$. This
  implies that \eqref{5eq:finalestimate} is satisfied if
  \[
    -\varepsilon \log \varepsilon \Bigl( \frac{2^5}{\log \frac{2}{1 + 1
        / \beta}} + \frac{2^4}{\log \frac{2}{1 - 1 / \beta}} \Bigr) <
    \bigl( 1 - 1 / \beta \bigr)^4.
  \]
  Finally we use that $- \varepsilon \log \varepsilon < \frac{3}{4}
  \sqrt{\varepsilon}$ and conclude that it is sufficient to pick any
  \[
    \varepsilon \leq \frac{16}{9} \frac{ \bigl( 1 - 1 / \beta
      \bigr)^8}{\Bigl( \frac{2^5}{\log \frac{2}{1 + 1 / \beta}} +
      \frac{2^4}{\log \frac{2}{1 - 1 / \beta}} \Bigr)^2}.
  \]
\end{remark}

\section{Proofs} \label{5sec:proofs}

Before we give the proofs of Theorems~\ref{5the:dimension} and
\ref{5the:abscont}, we make some preparations that will be used in both
proofs.

For fixed $1<\beta<2$ and $0<\lambda< 1$, the set $\Lambda$ satisfies
\begin{equation}\label{5eq:lambda}
  \Lambda = \{\, (x,y) : \exists \boldsymbol{a} \in S_\beta \text{
    such that } x = \pi_1 (\boldsymbol{a}, \lambda),\ y = \pi_2
  (\boldsymbol{a}, \beta) \,\},
\end{equation}
where
\begin{align*}
  \pi_1 (\boldsymbol{a}, \lambda) &= (1 - \lambda) \sum_{k =
    0}^{\infty} a_{-k} \lambda^k, \\
     \pi_2 (\boldsymbol{a}, \beta) &=
  \sum_{k = 1}^\infty a_k \beta^{-k}. 
\end{align*}
To see this one can argue as follows. Recall that $\Lambda$ is the
closure of the set $\bigcap_{n=0}^\infty T_{\beta, \lambda}^n (Q)$.
For each $(x,y) \in \bigcap_{n=0}^\infty T_{\beta, \lambda}^n (Q)$, we
have that $(x,y) = T_{\beta,\lambda}^n (x_n, y_n)$ for some sequence
$(x_n, y_n) \in Q$ with $T_{\beta, \lambda} (x_{n+1},y_{n+1}) = (x_n,
y_n)$. This means that there is a sequence $\boldsymbol{a}\in S_\beta$ such that

\[
 (x,y) = T_{\beta,\lambda}^n (x_n,y_n) = \bigg( \lambda^n x_n + (1 - \lambda) \sum_{k = 0}^{n - 1} a_{- k} \lambda^k, \ y \bigg), \]
and
\[
 T_{\beta,\lambda}^n (x,y) = (x_{-n}, y_{-n}) = \biggl( x_{-n}, \ \beta^n y - \sum_{k=1}^{n} \beta^{n-k} a_k \biggr).
\]
Hence
\begin{align*}
x &= \lambda^n x_n + (1 - \lambda) \sum_{k = 0}^{n - 1} a_{- k} \lambda^k, \\
y &= \beta^{-n} y_{-n}+ \sum_{k = 1}^n \beta^{-k} a_k.
\end{align*}

\noindent Letting $n \to \infty$ we get that all points $(x,y) \in
\bigcap_{n=0}^\infty T_{\beta, \lambda}^n (Q)$ are of the form $(\pi_1
(\boldsymbol{a}, \lambda), \pi_2 (\boldsymbol{a}, \beta))$.

For any point $(x,y) \in \Lambda$, there is sequence $(x^{(k)},
y^{(k)})$ of points from $\bigcap_{n=0}^\infty T_{\beta, \lambda}^n (Q)$
that converges to $(x,y)$. But each of the points $(x^{(k)}, y^{(k)})$ is
of the form $(\pi_1 (\boldsymbol{a}^{(k)}, \lambda), \pi_2
(\boldsymbol{a}^{(k)}, \beta))$ for some $\boldsymbol{a}^{(k)}\in
S_\beta$ . Since the space $S_\beta$ is closed we conclude that $(x,y)
\in \Lambda$ is also of this form.

On the other hand, $T_{\beta,\lambda} \bigl( \pi_1 (\boldsymbol{a},
\lambda), \pi_2 (\boldsymbol{a}, \beta) \bigr) = (\pi_1 (\sigma
\boldsymbol{a}, \lambda), \pi_2 (\sigma \boldsymbol{a}, \beta))$, so
the set of points of the form $(\pi_1 (\boldsymbol{a}, \lambda), \pi_2
(\boldsymbol{a}, \beta)$ is contained in $\Lambda$. This proves
\eqref{5eq:lambda}.

We are now going to describe the unstable manifolds using the symbolic
representation. Let
\begin{equation}\label{5eq:projection}
  \pi (\boldsymbol{a}, \beta, \lambda) = (\pi_1 (\boldsymbol{a},
  \lambda), \pi_2 (\boldsymbol{a}, \beta)).
\end{equation}
Consider a sequence $\boldsymbol{a} \in S_\beta$ and the corresponding
point $p = \pi (\boldsymbol{a}, \beta, \lambda)$. In the symbolic
space, $T_{\beta,\lambda}$ acts as the left-shift, so the local unstable manifold of $p$ corresponds to the set of sequences $\boldsymbol{b}$ such that $a_k = b_k$ for $k \leq 0$.

For $\lambda \leq 1 / 2$, $\pi$ is injective on $S_\beta$ so the local
unstable manifold of $p$ is unique. If $\lambda > 1/2$, then $\pi$
need not be injective on $S_\beta$, so the local unstable manifold of
$p$ need not be unique. Indeed, when $\pi$ is not injective there are $\boldsymbol{a} \neq
\boldsymbol{b}$ such that $p = \pi (\boldsymbol{a}, \beta, \lambda) =
\pi ( \boldsymbol{b}, \beta, \lambda)$, giving rise to different unstable manifolds.

Because of the description \eqref{5eq:Sbeta+} we have that $\pi
(\boldsymbol{b}, \beta, \lambda)$ is in the unstable manifold of $\pi
(\boldsymbol{a}, \beta, \lambda)$ if $(b_1, b_2, \ldots) \leq (a_1,
a_2, \ldots)$. Hence for the unstable manifold of $\pi
(\boldsymbol{a}, \beta, \lambda)$, there is a maximal
$\boldsymbol{c}$, with $c_k = a_k$ for all $k \leq 0$, such that $\pi
(\boldsymbol{c}, \beta, \lambda)$ is contained in the unstable
manifold. For this $\boldsymbol{c}$ we have that the unstable manifold
is the set
\[
  \{\, (x,y) : x = \pi_1 (\boldsymbol{a}, \lambda), y \leq \pi_2
  (\boldsymbol{c}, \beta) \, \},
\]
\mbox{i.e.} a vertical line. So, if $\boldsymbol{a}$ is such that
$(a_1,a_2 \dots)$ does not end with a sequence of zeros, then the
unstable manifold has positive length. Since $\Lambda$ is a union of
unstable manifolds, we conclude that $\Lambda$ is the union of
line-segments of the form $\{\, (x,y) : x \text{ fixed, } 0 \leq y \leq
c \,\}$.

We will be using the symbolic representation of $\Lambda$ given by
\eqref{5eq:lambda}, so we transfer the measure $\mu_\textsc{srb}$ to a
measure $\eta$ on $S_\beta$ by $\eta = \mu_\textsc{srb} \circ \pi
(\cdot, \beta, \lambda)$. We take a closer look at this measure $\eta$
before we start the proofs. Recall, from Section \ref{5sec:betashifts},
the probability measure $\mu_\beta$ on $[0,1]$ that is invariant under
$f_\beta$ and equivalent to Lebesgue measure. We get a shift-invariant
measure on $S_\beta^+$ by taking $\mu_\beta \circ \phi_\beta$ and it
can be extended in the natural way to a shift-invariant measure
$\eta_\beta$ on $S_\beta$.

Since $\mu_\textsc{srb}$ and $\mu_\beta$ are the unique
\textsc{srb}-measures for $T_{\beta,\lambda}$ and $f_\beta$
respectively, we conclude that $\mu_\beta$ is the projection of
$\mu_\textsc{srb}$ to the second coordinate. Thus $\eta$ and
$\eta_\beta$ coincide on sets of the form $\{\, \boldsymbol{a} : a_k =
b_k, k = 1,\ldots, n \,\}$. By invariance $\eta$ and $\eta_\beta$ will
coincide. Since $\eta_\beta$ does not depend on $\lambda$ by
construction, $\eta$ does not depend on $\lambda$. We now get the
following estimates using the relation between $\eta$ and $\mu_\beta$.
\begin{align}
  \eta([a_{-n} \dots &a_0]) = \mu_\beta \Big(\phi_\beta \big(\{\,
  (x_i)_{i=1}^\infty \in S_\beta^+ : x_1 \dots x_{n+1}= a_{-n}\dots
  a_0 \,\}\big)\Big) \nonumber \\ 
  &\leq K \diameter \Big(\phi_\beta
  \big(\{\, (x_i)_{i=1}^\infty \in S_\beta^+ : x_1 \dots x_{n+1}=
  a_{-n}\dots a_0 \,\}\big)\Big) \nonumber \\ 
  & \leq K
  \beta^{-(n+1)}, \label{5eq:measureestimate}
\end{align}
where $K<\infty$ is a constant. It follows from
\eqref{5eq:measureestimate} that for $\eta$ almost all
$\boldsymbol{a}\in S_\beta$, the sequence $(a_1,a_2, \dots)$ does not
end with a sequence of zeros. As already noted, this means that the
unstable manifold is a vertical line segment of positive length. Hence
for $\eta$ almost all $\boldsymbol{a}$ the corresponding unstable
manifold is of positive length. We will use this fact in the proofs
that follow.

\begin{proof}[Proof of Theorem \ref{5the:dimension}]

Let $\beta > 1$ and pick any $\beta'\geq \beta$ such that
$\beta'\in S$. For $\eta$ almost every sequence
$\boldsymbol{a}$, the local unstable manifold of $\pi (\boldsymbol{a},
\beta, \lambda)$ corresponding to $\boldsymbol{a}$, contains a
vertical line segment of positive length. Note that this length does
not depend on $\lambda$.  Let $\omega_\delta$ be the set of sequences
$\boldsymbol{a}$, such that the corresponding local unstable manifold
of $\pi (\boldsymbol{a}, \beta, \lambda)$ has a length of at least
$\delta > 0$. Take $\delta > 0$ so that $\omega_\delta$ has positive
$\eta$-measure. Then the set $\Omega_\delta = \pi (\omega_\delta,
\beta, \lambda)$ has the same positive
$\mu_\textsc{srb}$-measure. Consider the restriction of
$\mu_\textsc{srb}$ to $\Omega_\delta$ and project this measure to $[0,
  1) \times \{ 0 \}$. Let $\mu_\textsc{srb}^\mathrm{s}$ denote this
  projection.

Take an interval $I = (c, d)$ with $0 < c < d < 1 / \beta'$. Let $t$
be a number in $(0, 1)$. We estimate the quantity
\[
  J (t) = \int_I \int_{\Omega_\delta} \int_{\Omega_\delta}
  \frac{1}{|x_1 - x_2|^t} \, \mathrm{d} \mu_\textsc{srb}^\mathrm{s}
  (x_1) \mathrm{d} \mu_\textsc{srb}^\mathrm{s} (x_2) \mathrm{d}
  \lambda.
\]
If this integral converges, then for Lebesgue almost every $\lambda
\in I$, the dimension of $\mu_\textsc{srb}^\mathrm{s}$ is at least
$t$, and so the dimension of $\mu_\textsc{srb}$ is at least $1 +
t$. Writing $J (t)$ as an integral over the symbolic space we have
that
\[
  J (t) = \int_I \int_{\omega_\delta} \int_{\omega_\delta}
  \frac{1}{|\pi_{1} (\boldsymbol{a}, \lambda) - \pi_1 (\boldsymbol{b},
    \lambda) |^t} \, \mathrm{d} \eta (\boldsymbol{a}) \mathrm{d} \eta
  (\boldsymbol{b}) \mathrm{d} \lambda.
\]
Since $\eta$ does not depend on $\lambda$ we can change order of
integration and write
\[
  J (t) = \int_{\omega_\delta} \int_{\omega_\delta} \int_I
  \frac{1}{|\pi_{1} (\boldsymbol{a}, \lambda) - \pi_1 (\boldsymbol{b},
    \lambda) |^t} \, \mathrm{d} \lambda \mathrm{d} \eta
  (\boldsymbol{a}) \mathrm{d} \eta (\boldsymbol{b}).
\]
Now, $\boldsymbol{a}, \boldsymbol{b} \in S_{\beta} \subset
S_{\beta'}$, so for $\boldsymbol{a}$ and $\boldsymbol{b}$ with $a_j =
b_j$ for $j = -k+1, \ldots, 0$ and $a_{-k} \ne b_{-k}$, we have
\[
  |\pi_1 (\boldsymbol{a}, \lambda) - \pi_1 (\boldsymbol{b}, \lambda)
 |^t = \lambda^{kt} |\pi_{1} (\sigma^{-k} \boldsymbol{a}, \lambda) -
 \pi_1 (\sigma^{-k} \boldsymbol{b}, \lambda) |^t = \lambda^{kt}
 |g(\lambda)|^t,
\]
where $g$ is of the form \eqref{5eq:powerseries}. Since $I=[c,d]
\subset [0,1/\beta']$, we can use the transversality from Lemma
\ref{5lem:transversality} to conclude that
\begin{equation}\label{5eq:innerintegral}
  \int_I \frac{d\lambda}{|\pi_1 (\boldsymbol{a}, \lambda) - \pi_1
    (\boldsymbol{b}, \lambda) |^t}  \leq c^{-kt}  \int_I \frac{d\lambda}{|g(\lambda)|^t}\leq C c^{-kt}
\end{equation}
for some constant $C$. We can write $S_\beta
\times S_\beta=A\cup B$, where

\begin{align*}
  A = & \bigcup_{k = 1}^\infty \bigcup_{[a_{-k + 1}, \ldots, a_0]} [0,
    a_{-k + 1}, \ldots, a_0] \times [1, a_{-k + 1}, \ldots, a_0]
  \\ \cup& \bigcup_{k = 1}^\infty \bigcup_{[a_{-k + 1}, \ldots, a_0]}
     [1, a_{-k + 1}, \ldots, a_0] \times [0, a_{-k + 1}, \ldots, a_0],
\end{align*}
and 
\[
  B = \bigcup_{\boldsymbol{a} \in S_\beta} \{\boldsymbol{a}\} \times
  \{\boldsymbol{a} \}.
\]
Since $\eta(\boldsymbol{a})=0$ for all $\boldsymbol{a} \in S_\beta$,
we can replace $\omega_\delta\times \omega_\delta$ by $A$ in the
estimates, so after using \eqref{5eq:innerintegral} we get
\begin{align*}
  J (t) &\leq \sum_{k = 1}^\infty \sum_{[a_{-k + 1}, \ldots, a_0]} 2 C
  c^{-kt} \int_{[0, a_{-k + 1}, \ldots, a_0]} \int_{[1, a_{-k + 1},
      \ldots, a_0]} \, \mathrm{d} \eta \mathrm{d} \eta \\ &\leq
  \sum_{k = 1}^\infty \sum_{[a_{-k + 1}, \ldots, a_0]} 2C K c^{-kt}
  \beta^{-k} \int_{[1, a_{-k + 1}, \ldots, a_0]} \, \mathrm{d} \eta
  \\ &\leq 2CK \sum_{k = 0}^\infty c^{-kt} \beta^{-k},
\end{align*}
by \eqref{5eq:measureestimate} and the fact that $\eta$ is a probability
measure. This series converges provided that $t < \frac{\log
  \beta}{\log 1/c}$.

We have now proved that for \mbox{a.e.} $\lambda$ in $I = (c, d)$, the
dimension of the \textsc{srb}-measure is at least $1 + \frac{\log
  \beta}{\log 1/c}$. To get the result of the theorem, we let
$\varepsilon > 0$ and write $I = (0, 1 / \beta')$ as a union of
intervals $I_n = (c_n, d_n)$ such that $ \frac{\log \beta}{\log 1/c_n}
> \frac{\log \beta}{\log 1/d_n} - \varepsilon$. Then the dimension is
at least $1 + \frac{\log \beta}{\log c_n} \geq 1 + \frac{\log
  \beta}{\log 1/\lambda} - \varepsilon$ for \mbox{a.e.}  $\lambda \in
I$. Since $\varepsilon$ and $\beta'$ was arbitrary this proves the
theorem.
\end{proof}

\begin{proof}[Proof of Theorem \ref{5the:abscont}]

In \cite{PeresSolomyak}, Peres and Solomyak gave a simplified proof of
Solomyak's result from \cite{Solomyak}, about the absolute continuity
of the Bernoulli convolution $\sum_{k = 1}^\infty \pm \lambda^k$. The proof that follows uses the method from \cite{PeresSolomyak} and we refer to that
paper for omitted details.

Let $\gamma \in S$, pick $\varepsilon$ according to Lemma
\ref{5lem:transversality} and let $\beta$ be such that $1/\beta\in
    [1/\gamma, 1/\gamma+\varepsilon)$. Let
      $\mu_\textsc{srb}^{\mathrm{s}}$ be the projection of
      $\mu_\textsc{srb}$ to $ [0,1] \times \{0\}$. We form
\[
  \underline D (\mu_\textsc{srb}^\mathrm{s},x)=\liminf_{r\to 0}
  \frac{\mu_\textsc{srb}^\mathrm{s}(B_r(x))}{2r},
\]
where $B_r (x) = (x-r, x+r)$, and note that
$\mu_\textsc{srb}^\mathrm{s}$ is absolutely continuous with respect to
Lebesgue measure if $\underline D
(\mu_\textsc{srb}^\mathrm{s},x)<\infty$ for
$\mu_\textsc{srb}^\mathrm{s}$ almost all $x$. Since we already have
absolute continuity in the vertical direction, it would then follow that
$\mu_\textsc{srb}$ is absolutely continuous with respect to the
two-dimensional Lebesgue measure. If
\[
  S = \int_I \int_{[0,1]} \underline D (\mu_\textsc{srb}^\mathrm{s},x)
  \mathrm{d} \mu_\textsc{srb}^\mathrm{s} (x) \mathrm{d} \lambda<\infty
  ,
\]
for an interval $I$, then $\mu_\textsc{srb}^\mathrm{s}$ is absolutely continuous
for almost all $\lambda\in I$. So if we prove that $S$ is bounded for
$I = [c, 1/\gamma + \varepsilon]$, where $c>1/\beta$ is arbitrary,
then we are done.

Let $I=[c, 1/\gamma + \varepsilon]$ for some fixed $c>1/\beta$. By
Fatou's Lemma we get
\begin{align*}
S &\leq \liminf_{r\to 0} (2r)^{-1} \int_I \int_{[0,1]}
\mu_\textsc{srb}^\mathrm{s}(B_r (x)) \, \mathrm{d}
\mu_\textsc{srb}^\mathrm{s} (x) \mathrm{d} \lambda \\ &= \liminf_{r\to
  0} (2r)^{-1} \int_I \int_{S_{\gamma}} \eta (B_r (\boldsymbol a,
\lambda)) \, \mathrm{d} \eta(\boldsymbol a) \mathrm{d} \lambda.
 \end{align*}
where $B_r (\boldsymbol{a}, \lambda) = \{\, \boldsymbol{b} : | \pi_1
(\boldsymbol{a}, \lambda) - \pi_1 (\boldsymbol{b}, \lambda) | < r
\,\}$.  We have
\[
 \eta (B_r (\boldsymbol a, \lambda)) = \int_{S_\gamma}
 \chi_{\{\boldsymbol b\in S_{\gamma} \colon | \pi_1(\boldsymbol a,
   \lambda)- \pi_1(\boldsymbol b, \lambda)|\leq r\}} ( \boldsymbol a)
 \, \mathrm{d} \eta(\boldsymbol b),
\]
where $\chi$ is the characteristic function. Since $\eta$ is
independent of $\lambda$, we can change the order of integration and
we get
\begin{align*}
  S \leq \liminf_{r\to 0} (2r)^{-1} \int_{S_\gamma} \int_{S_\gamma}
  \mu_\mathrm{Leb} \{ \lambda \in I \colon | \pi_1(\boldsymbol a,
  \lambda)- \pi_1(\boldsymbol b, \lambda)|\leq r\} \, \mathrm{d}
  \eta(\boldsymbol a)\mathrm{d} \eta(\boldsymbol b),
\end{align*}
where $\mu_{Leb}$ is the one-dimensional Lebesgue measure. Now,
$\boldsymbol{a}, \boldsymbol{b} \in S_{\gamma}$, so for
$\boldsymbol{a}$ and $\boldsymbol{b}$ with $a_j = b_j$ for $j = -k+1,
\ldots, 0$ and $a_{-k} \ne b_{-k}$, we have
\[
  |\pi_1 (\boldsymbol{a}, \lambda) - \pi_1 (\boldsymbol{b}, \lambda)
 | = \lambda^{k} |\pi_{1} (\sigma^{-k} \boldsymbol{a}, \lambda) -
 \pi_1 (\sigma^{-k} \boldsymbol{b}, \lambda) | = \lambda^{k}
 |g(\lambda)|,
\]
where $g$ is of the form \eqref{5eq:powerseries}. Since $I= [c, 1/\gamma
  +\varepsilon]$ we can use the transversality from
Lemma~\ref{5lem:transversality} and we get
\begin{multline*}
  \mu_\mathrm{Leb} \{ \lambda \in I \colon | \pi_1(\boldsymbol a,
  \lambda)- \pi_1(\boldsymbol b, \lambda)|\leq r\} \leq
  \mu_\mathrm{Leb} \{\lambda \in I : |g(\lambda)|\leq r c^{-k} \}
  \\ \leq \tilde K r c^{-k},
\end{multline*}
for some constant $\tilde K<\infty$. As in the proof of
Theorem~\ref{5the:dimension}, we can disregard the set
\[
  B = \bigcup_{\boldsymbol{a} \in S_\beta} \{\boldsymbol{a}\} \times
  \{\boldsymbol{a} \}.
\]
and after using \eqref{5eq:measureestimate} we get
\begin{align*}
  S &\leq \liminf_{r\to 0} (2r)^{-1} \sum_{k = 1}^\infty \sum_{[a_{-k
        + 1}, \ldots, a_0]} 2\tilde K r c^{-k} \int_{[0, a_{-k + 1},
      \ldots, a_0]} \int_{[1, a_{-k + 1}, \ldots, a_0]} \, \mathrm{d}
  \eta \mathrm{d} \eta \\ &\leq \sum_{k = 1}^\infty \sum_{[a_{-k + 1},
      \ldots, a_0]} \tilde K K c^{-k} \beta^{-k} \int_{[1, a_{-k + 1},
      \ldots, a_0]} \, \mathrm{d} \eta \\ &\leq \tilde K K \sum_{k =
    0}^\infty (c \beta)^{-k},
\end{align*}
which converges since $c \beta >1$. Since $c>1/\beta$ was arbitrary, we are done.
\end{proof}

\end{document}